\newtheorem*{theorem*}{Theorem}
\newtheorem{lem}{Lemma}
\newtheorem{cor}{Corollary}
\newtheorem{rem}{Remark}
\DeclareRobustCommand{\divby}{
  \mathrel{\vbox{\baselineskip.65ex\lineskiplimit0pt\hbox{.}\hbox{.}\hbox{.}}}
}
\newcommand\DistTo{\xrightarrow{
   \,\smash{\raisebox{-0.65ex}{\ensuremath{\scriptstyle\sim}}}\,}}
\author{T. Hakobyan, S. Vostokov}
\title{Honda formal group as Galois module in unramified extensions of local fields}	
\date{}
\begin{document}
\maketitle

 \begin{abstract}
    \small For given rational prime number $p$ consider the tower of finite extensions of fields  $K_0/\mathbb{Q}_p,$ $K/K_0, L/K, M/L$, where $K/K_0$ is unramified and $M/L$ is a Galois extension with Galois group $G$. Suppose one dimensional Honda formal group over the ring $\mathcal{O}_K$, relative to the extension $K/K_0$ and uniformizer $\pi\in K_0$ is given. The operation $x\underset{F}+y=F(x,y)$ sets a new structure of abelian group on the maximal ideal $\mathfrak{p}_M$ of the ring $\mathcal{O}_M$ which we will denote by $F(\mathfrak{p}_M)$. In this paper the structure of $F(\mathfrak{p}_M)$ as $\mathcal{O}_{K_0}[G]$-module is studied for specific unramified $p$-extensions $M/L$.%, provided that $W_F\cap F(\mathfrak{p}_L)=W_F\cap F(\mathfrak{p}_M)=W^s_F$ for certain  $s\geq 1$, where $W^s_F$ is the $\pi^s$-torsion, while $W_F=\bigcup_{n=1}^\infty W^n_F$ is the total $\pi$-torsion of a fixed algebraic closure $K^{alg}$ of $K$.
	
  \end{abstract}

\subsection{Introduction}

Let $p$ be a rational prime, $K/\mathbb{Q}_p, L/K, M/L$ be a tower of finite extensions of local fields, $M/L$ be a Galois extension with Galois group $G$ and $F$ be a one dimensional formal group law over the ring $\mathcal{O}_K$. The operation $x\underset{F}+y=F(x,y)$ sets a new structure of abelian group on the maximal ideal $\mathfrak{p}_M$ of the ring $\mathcal{O}_M$ which we will denote by $F(\mathfrak{p}_M)$. Taking into account the natural action of the group $G$ on $F(\mathfrak{p}_M)$, one may consider it as an $\text{End}_{\mathcal{O}_K}(F)[G]$-module, in which the multiplication by scalars from $\text{End}_{\mathcal{O}_K}(F)$  is performed by the rule  $f*x=f(x).$ We refer the reader  to \cite[Chapter 6, \S 3]{Cas}, \cite[Chapter 3, \S 6]{Neu}, \cite[Chapter 4]{Iwa3} and \cite[Chapter 4]{Sil} for more details concerning formal groups and the group $F(\mathfrak{p}_M)$. \newline
 If $F$ is a Lubin-Tate formal group law, then there is an injection $\mathcal{O}_K\hookrightarrow \text{End}_{\mathcal{O}_K}(F)$ (see \cite[Chapter 6, Prop. 3.3]{Cas}), which enables us to regard $F(\mathfrak{p}_M)$ as an $\mathcal{O}_K[G]$-module. The structure of this module in case of multiplicative formal group $F=G_m$ and $K=\mathbb{Q}_p$ is studied in sufficient detail in  \cite{Iwa4,Iwa5,Borevich}. The starting point of the current study is the following theorem of Borevich in \cite{Borevich}. 
\begin{theorem*} [Borevich, 1965]
Suppose $M/L$ is an unramified $p$-extension and $K=\mathbb{Q}_p$. If the fields $M$ and $L$ have the same irregularity degree \footnote{This means that the field $L$ contains a $p^s$-th primitive root of unity, while $M$ does not contain a primitive $p^{s+1}$-th root of unity for some $s\geq 1$} then for the $\mathcal{O}_K[G]$-module $U_M$ there exists a system of generating elements      $\theta_1,...,\theta_{n-1},\xi,\omega$ with the unique defining relation $\xi^{p^s}=\omega^{\sigma-1}$, where $n=[L:\mathbb{Q}_p]$ and $\sigma$ is a generating element of the Galois group $G=\mathrm{Gal}(M/L)$.
\end{theorem*}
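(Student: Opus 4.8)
\emph{Proof proposal.} Since $M/L$ is unramified, $G=\mathrm{Gal}(M/L)$ is cyclic, generated by the Frobenius $\sigma$; write $p^m=|G|$, so that $[M:\mathbb Q_p]=np^m$. As $K=\mathbb Q_p$ we have $\mathcal O_{K_0}[G]=\mathbb Z_p[G]=:R$, a local ring with maximal ideal $\mathfrak m=(p,\sigma-1)$, and $U_M$ is a finitely generated $R$-module. The plan is to determine $U_M$ from two invariants — its $\mathbb Z_p$-rank $np^m$ and its torsion subgroup — and then to split off a free module, leaving the single relation. The first thing I would record is that, because $L$ and $M$ have the \emph{same} irregularity degree, the torsion subgroup of $U_M$ is $\mu_{p^s}(M)=\mu_{p^s}(L)=\langle\zeta\rangle$, cyclic of order $p^s$; and since $\zeta\in L=M^{G}$, this is a \emph{trivial} $G$-module.

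The structural input is cohomological triviality. Because $M/L$ is unramified a uniformizer of $L$ is also one of $M$, so the successive quotients of the filtration $U_M^{(i)}=1+\mathfrak p_M^{i}$ are isomorphic, as $\mathbb F_p[G]$-modules, to the additive group of the residue field $\overline M$; by the normal basis theorem $\overline M$ is free over $\overline L[G]$, hence cohomologically trivial, and passing to the limit shows $U_M$ is a cohomologically trivial $R$-module. I would extract two consequences. First, by Nakayama's theorem on cohomologically trivial modules, $U_M$ has projective dimension $\le 1$ over $R$. Second, from $\widehat H^{0}(G,U_M)=\widehat H^{-1}(G,U_M)=0$ the norm $N_{M/L}\colon U_M\to U_L$ is surjective with kernel $(\sigma-1)U_M$, whence $(U_M)_G=U_M/(\sigma-1)U_M\cong U_L$.

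These facts already force the abstract shape of the answer. The minimal number of $R$-generators of $U_M$ equals $\dim_{\mathbb F_p}U_M/\mathfrak m U_M=\dim_{\mathbb F_p}(U_M)_G/p=\dim_{\mathbb F_p}U_L/U_L^{p}=n+1$, the last equality using $s\ge 1$. Choosing a minimal generating system gives a presentation $R^{\,n+1}\twoheadrightarrow U_M$ whose syzygy module is projective (projective dimension $\le1$), hence free; comparing $\mathbb Z_p$-ranks, $(n+1)p^m-np^m=p^m$, shows the syzygy is free of rank one. Thus $U_M$ admits a minimal system of $n+1$ generators with a single defining relation — which is already the qualitative assertion of the theorem. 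It remains to realise the generators as $\theta_1,\dots,\theta_{n-1},\xi,\omega$ and the relation as $\xi^{p^s}=\omega^{\sigma-1}$.

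For this I would construct the two distinguished elements directly: using surjectivity of the norm pick $\xi\in U_M$ with $N_{M/L}\xi=\zeta$, and from $N_{M/L}(\xi^{p^{s}})=\zeta^{p^{s}}=1$ together with $\ker N_{M/L}=(\sigma-1)U_M$ obtain $\omega$ with $\xi^{p^{s}}=\omega^{\sigma-1}$. The content of the theorem is then that $\xi,\omega$ extend to a minimal generating system and that their relation is exactly the generator of the rank-one syzygy module. I expect this identification to be the main obstacle. The guiding computation is that the norm element $\nu=1+\sigma+\dots+\sigma^{p^{m}-1}$ satisfies $\nu\cdot\xi=N_{M/L}\xi=\zeta$ of order exactly $p^s$, which both shows the relation vector $p^{s}e_\xi-(\sigma-1)e_\omega$ is non-trivial and, by matching the torsion $\mathbb Z/p^{s}$ of $U_M$ against that of the model module $R^{2}/R\bigl(p^{s}e_\xi-(\sigma-1)e_\omega\bigr)$, pins the relation to this form up to a unit of $R$. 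To make $\xi,\omega$ part of a minimal system I would use the freedom $\xi\mapsto\xi\,u^{\sigma-1}$ (which preserves $N_{M/L}\xi=\zeta$) and $\omega\mapsto\omega u_0$ with $u_0\in U_L$ (which preserves the relation) to arrange that their images in $U_M/\mathfrak m U_M\cong U_L/U_L^{p}$ are independent and that the complementary quotient is $R$-free of rank $n-1$; one then splits off $R^{\,n-1}=\langle\theta_1,\dots,\theta_{n-1}\rangle$. It is precisely at this last step — controlling the torsion and the saturation of $\langle\xi,\omega\rangle_R$ — that the hypothesis of equal irregularity degree is indispensable: were $M$ more irregular than $L$, the torsion of $U_M$ would exceed $\langle\zeta\rangle$ and neither the generator count nor the single relation above would survive.
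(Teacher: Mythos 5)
Your homological skeleton is attractive and genuinely different from the paper's method: the paper (in its main theorem, which specializes to Borevich's statement with $h=1$, $F=G_m$) never invokes projective dimension; it fixes explicit elements $\xi,\omega,\theta_j$, proves by a direct computation that their $\sigma$-translates are linearly independent modulo $p$-th powers, counts dimensions, and then establishes the defining relation by an inductive divisibility argument. Your route — cohomological triviality $\Rightarrow$ projective dimension $\le 1$ over the local ring $R=\mathbb{Z}_p[G]$, then the generator count $\dim_{\mathbb{F}_p}U_M/\mathfrak{m}U_M=n+1$ and the $\mathbb{Z}_p$-rank count forcing the syzygy module to be free of rank one — does cleanly deliver the qualitative statement ``$n+1$ generators, one relation,'' which the paper only obtains at the end of its computation. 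However, the quantitative half of the theorem is exactly where your argument has a genuine gap, and you flag it yourself as ``the main obstacle'' without closing it.

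The gap has two parts. First, you must show that your particular elements $\xi,\omega,\theta_1,\dots,\theta_{n-1}$ form a \emph{minimal} generating system, i.e. that their images in $U_M/\mathfrak{m}U_M$ are independent; your plan is to ``arrange'' this using the freedoms $\xi\mapsto\xi u^{\sigma-1}$ and $\omega\mapsto\omega u_0$ with $u_0\in U_L$, but these moves provably cannot help: $(\sigma-1)U_M\subseteq\mathfrak{m}U_M$, and since the norm is onto and $\nu=1+\sigma+\dots+\sigma^{p^m-1}\equiv p^m \pmod{(\sigma-1)}$, one also has $U_L=N_{M/L}U_M\subseteq U_M^p\cdot(\sigma-1)U_M=\mathfrak{m}U_M$, so both adjustments leave the classes of $\xi$ and $\omega$ modulo $\mathfrak{m}U_M$ untouched. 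The independence of these classes is therefore not something to be arranged but a fact to be \emph{proved} — the delicate point being that the class of $\omega$ (equivalently $N_{M/L}\omega$ modulo $U_L^p$) is nontrivial and independent of $\zeta,\varepsilon_j$ — and this is precisely what the paper's explicit computation does (apply $\sigma-1$ to a hypothetical dependence, use $\xi^{p^s}=\omega^{\sigma-1}$, and reduce to the independence statements of Lemmas \ref{lem2.9}, \ref{lem2.11}, \ref{lem2.13}). Second, even granting minimality, exhibiting $p^s e_\xi-(\sigma-1)e_\omega$ as a \emph{nonzero element} of the rank-one free syzygy module is not enough: in $R$, which is local but not a domain and far from a PID-like behaviour on this point, a nonzero element of a free rank-one module need not generate it; you must rule out that your relation is a non-unit multiple $\lambda\rho$ of the true generator $\rho$. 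Your ``torsion matching'' against the model module $R^2/R\bigl(p^s e_\xi-(\sigma-1)e_\omega\bigr)$ could in principle do this (the kernel of the induced surjection from the model onto $U_M$ is $R/\lambda R$, which one must kill by comparing torsion orders), but that computation — determining the torsion of the model module and making the comparison exact — is not carried out, whereas the paper's inductive divisibility argument handles exactly this step. So as written the proposal proves the qualitative form of Borevich's theorem but not the stated defining relation.
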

It may seem that the group of principal units $E_M$ has nothing to do with formal groups, but in fact it is easy to show that for the multiplicative formal group $F=G_m$ there is an isomorphism $F(\mathfrak{p}_M)\cong E_M, x\mapsto 1+x$ of $\mathbb{Z}_p[G]$-modules.
The next stop in the course of investigations was the joint work of S.V.Vostokov and I.I.Nekrasov \cite{Nek7}, where they generalized the aforementioned theorem to the case of Lubin-Tate formal groups. More precisely, they managed to prove the following 
\begin{theorem*}[Vostokov-Nekrasov, 2014]
Suppose $M/L$ is an unramified $p$-extension and $F$ is a Lubin-Tate formal group for the prime element $\pi\in K$. Assume moreover that the fields $M$ and $L$ have the same irregularity degree, namely they contain a generator of $\ker [\pi^s]_F$ and do not contain a generator of $\ker [\pi^{s+1}]_F$ for some $s\geq 1$ \footnote{In fact, $\ker[\pi^s]_F$ is a cyclic $\mathcal{O}_K$-module, whenever $F$ is a Lubin-Tate formal group (See \cite[Chapter 3, Prop. 7.2]{Neu})}. Then for the $\mathcal{O}_K[G]$-module $F(\mathfrak{p}_M)$ there exists a system of generating elements $\theta_1,...,\theta_{n-1},\xi,\omega$ with the unique defining relation $[\pi^s]_F(\xi)=\omega^{\sigma}\underset{F}-\omega$, where $n=[L:K]$ and $\sigma$ is a generating element of the Galois group $G=\mathrm{Gal}(M/L)$. 
\end{theorem*}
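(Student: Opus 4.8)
The plan is to identify $F(\mathfrak{p}_M)$ as a \emph{cohomologically trivial} $\mathcal{O}_K[G]$-module whose underlying $\mathcal{O}_K$-module is $\mathcal{O}_K^{[M:K]}\oplus \mathcal{O}_K/\pi^s$, and then to read off the presentation from the vanishing of Tate cohomology together with an explicit choice of generators. Throughout I use that $M/L$ unramified makes $G=\mathrm{Gal}(M/L)$ cyclic, generated by the Frobenius $\sigma$, and that the endomorphisms $[\alpha]_F$ with $\alpha\in\mathcal{O}_K$ commute with the $G$-action, so $F(\mathfrak{p}_M)$ is genuinely an $\mathcal{O}_K[G]$-module. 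Concretely, the target is to show $F(\mathfrak{p}_M)\cong \mathcal{O}_K[G]^{\,n-1}\oplus Y$, where $Y=\mathcal{O}_K[G]^2/\bigl(\pi^s e_{\xi}-(\sigma-1)e_{\omega}\bigr)$; the asserted generators and single relation are exactly a reading of this decomposition.

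First I would pin down the torsion. Since $F$ is Lubin-Tate, the $\pi$-power torsion of $F(\mathfrak{p}_M)$ is $\ker[\pi^{\infty}]_F\cap \mathfrak{p}_M$, and by the irregularity hypothesis this is exactly $T:=\ker[\pi^s]_F\cong \mathcal{O}_K/\pi^s$; because a generator of $\ker[\pi^s]_F$ already lies in $L$, the group $G$ acts trivially on $T$. Next I would compute the Tate cohomology of $G$ on $F(\mathfrak{p}_M)$. Applying $\log_F$ on a sufficiently high piece $F(\mathfrak{p}_M^i)$ identifies $F(\mathfrak{p}_M)/T$, up to commensurability, with the additive module $\mathcal{O}_M$; since $M/L$ is unramified (hence tame), the normal integral basis theorem makes $\mathcal{O}_M$ a free $\mathcal{O}_L[G]$-module of rank one, hence a free $\mathcal{O}_K[G]$-module of rank $n=[L:K]$, so its Herbrand quotient is $1$. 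As the Herbrand quotient is insensitive to the finite module $T$ and to commensurability, $h(G,F(\mathfrak{p}_M))=1$. Independently, the formal-group norm $N_{M/L}\colon F(\mathfrak{p}_M)\to F(\mathfrak{p}_L)$, $x\mapsto g_1(x)\underset{F}{+}\cdots\underset{F}{+}g_{|G|}(x)$, is surjective for the unramified extension $M/L$ — this follows by successive approximation from surjectivity of the residue-field trace $\mathrm{Tr}_{\bar M/\bar L}$ on the graded quotients $F(\mathfrak{p}_M^i)/F(\mathfrak{p}_M^{i+1})\cong \bar M$ — so $\widehat{H}^{0}(G,F(\mathfrak{p}_M))=0$. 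With $h=1$ this forces $\widehat{H}^{-1}(G,F(\mathfrak{p}_M))=0$ as well, and by periodicity for the cyclic group $G$ the module $F(\mathfrak{p}_M)$ is cohomologically trivial; in particular it has finite projective dimension over the local ring $\mathcal{O}_K[G]$, necessarily $1$ since it is not free (it has nonzero torsion), and the terms of a minimal resolution are free.

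Now I would construct the distinguished generators. Choose a generator $t_0$ of $T=\ker[\pi^s]_F$, which lies in $F(\mathfrak{p}_L)$. By surjectivity of the norm there is $\xi\in F(\mathfrak{p}_M)$ with $N_{M/L}\,\xi=t_0$. Then $N_{M/L}\bigl([\pi^s]_F\xi\bigr)=[\pi^s]_F\,t_0=0$, so $[\pi^s]_F\xi\in\ker N_{M/L}$; since $\widehat{H}^{-1}=0$ means $\ker N_{M/L}=(\sigma-1)F(\mathfrak{p}_M)$, there is $\omega\in F(\mathfrak{p}_M)$ with $[\pi^s]_F(\xi)=\omega^{\sigma}\underset{F}{-}\omega$, which is precisely the asserted relation. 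The $\mathcal{O}_K[G]$-submodule $\langle\xi,\omega\rangle$ then contains all of $T$ (as $N_{M/L}\xi=t_0$ and $N_{M/L}\in\mathcal{O}_K[G]$), and I would show, using cohomological triviality and the long exact sequence, that the quotient $F(\mathfrak{p}_M)/\langle\xi,\omega\rangle$ is $\mathcal{O}_K$-free and cohomologically trivial, hence $\mathcal{O}_K[G]$-free of rank $n-1$; lifting a basis yields $\theta_1,\dots,\theta_{n-1}$. A generator count by Nakayama ($\dim_{\mathcal{O}_K/\pi}F(\mathfrak{p}_M)/(\pi,\sigma-1)F(\mathfrak{p}_M)=n+1$) shows these $n+1$ elements form a minimal generating set, and matching $\mathcal{O}_K$-invariants — the free part has rank $[M:K]=n\,|G|$, while $n+1$ generators subject to one primitive relation give exactly this rank together with the cyclic torsion $\mathcal{O}_K/\pi^s$ — combined with $\mathrm{pd}_{\mathcal{O}_K[G]}=1$ shows the relation module is free of rank one, so the displayed relation is the unique defining one.

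The main obstacle, I expect, is twofold, and both parts are genuinely arithmetic rather than formal. The first is the cohomological input: the surjectivity of the formal-group norm in the unramified case, together with the passage from $h=1$ and $\widehat{H}^{0}=0$ to full cohomological triviality, where one must control the interaction of $N_{M/L}$ with the filtration and with the torsion at the bottom of $F(\mathfrak{p}_M)$. The second, and sharper, difficulty is to guarantee that the relation produced is the \emph{unique} defining one, i.e.\ that the relation module is free of rank one generated by $\pi^s e_{\xi}-(\sigma-1)e_{\omega}$. This is exactly where the hypothesis of equal irregularity degree for $M$ and $L$ is decisive: it ensures that the torsion neither grows in passing from $L$ to $M$ nor contributes auxiliary relations, so that $\xi,\omega$ carry precisely the single relation $[\pi^s]_F(\xi)=\omega^{\sigma}\underset{F}{-}\omega$ and nothing more.
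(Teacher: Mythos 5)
Your cohomological groundwork is sound and is essentially how Lemma~\ref{lem2.8} is proved in the cited literature (Herbrand quotient $1$ via the logarithm and Noether's theorem for the tame extension $M/L$, norm surjectivity by successive approximation, then Nakayama's criterion upgrading two consecutive vanishing groups to full cohomological triviality), and your construction of $\xi$ and $\omega$ coincides with the paper's. The genuine gap is the sentence ``I would show, using cohomological triviality and the long exact sequence, that the quotient $F(\mathfrak{p}_M)/\langle\xi,\omega\rangle$ is $\mathcal{O}_K$-free and cohomologically trivial.'' The long exact sequence only bounds $\widehat{H}^i\bigl(H,F(\mathfrak{p}_M)/\langle\xi,\omega\rangle\bigr)$ by $\widehat{H}^{i+1}\bigl(H,\langle\xi,\omega\rangle\bigr)$, so it gives nothing unless you already know the \emph{submodule} $\langle\xi,\omega\rangle$ is cohomologically trivial; but that is tantamount to knowing $\langle\xi,\omega\rangle\cong \mathcal{O}_K[G]^2/\bigl(\pi^s e_{\xi}-(\sigma-1)e_{\omega}\bigr)$, i.e.\ the very structure being proved. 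Likewise the $\mathcal{O}_K$-freeness of the quotient (saturatedness of $\langle\xi,\omega\rangle$ in $F(\mathfrak{p}_M)$) is asserted, not argued. As a result the proposal never proves that $\xi,\omega,\theta_1,\dots,\theta_{n-1}$ generate $F(\mathfrak{p}_M)$ at all: nothing in it rules out that the class of $\omega$ in $F(\mathfrak{p}_M)/(\pi,\sigma-1)F(\mathfrak{p}_M)$ lies in the span of the classes of $\xi$ and the $\theta_j$ --- equivalently, under the norm isomorphism $F(\mathfrak{p}_M)/(\sigma-1)\cong F(\mathfrak{p}_L)$, that $N_{F(\mathfrak{p}_M)}(\omega)$ falls into the span of $\zeta=N_{F(\mathfrak{p}_M)}(\xi)$ and the $N_{F(\mathfrak{p}_M)}(\theta_j)$ modulo $[\pi]_F(F(\mathfrak{p}_L))$. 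Ruling this out is exactly the computational core of the paper's argument (specialized to $h=1$): the linear independence of $\mathscr{E}=\{\omega,\xi^{\sigma^k},\theta_j^{\sigma^k}\}$ modulo $[\pi]_F(F(\mathfrak{p}_M))$, proved by applying $\sigma-1$, using $[\pi^s]_F\xi=\omega^{\sigma}\underset{F}-\omega$, and Lemmas \ref{lem2.9}, \ref{lem2.11}, \ref{lem2.13}. Your outline contains no substitute for this step.

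The endgame has a second, smaller gap, which \emph{is} fixable. From ``the relation module $R$ is free of rank one'' (correct, given generation, via the Nakayama count, $\mathrm{pd}\le 1$, and the rank count) and $v:=\pi^s e_{\xi}-(\sigma-1)e_{\omega}\in R$ you conclude $R=(v)$; but a priori $v=\lambda\rho$ for a generator $\rho$ and a possibly non-unit $\lambda$. Matching the invariants you list cannot close this: already for $\mathcal{O}_K=\mathbb{Z}_p$, $|G|=p$, $s=2$, the module $\mathcal{O}_K[G]^2/\bigl(\pi^s e_{\xi}-((\sigma-1)-N)e_{\omega}\bigr)$, where $N=\sum_k\sigma^k$, is cohomologically trivial, has projective dimension $1$, is minimally $2$-generated, and has $\mathcal{O}_K$-rank $|G|$ and torsion $\mathcal{O}_K/\pi^s$ --- all your invariants --- yet it is not isomorphic to $\mathcal{O}_K[G]^2/(v)$ (lift a putative isomorphism to the minimal resolutions and apply the augmentation map: the lifted matrix has determinant in $(\pi,\sigma-1)$, hence is not invertible). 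What closes the gap is a divisibility lemma rather than invariant matching: if $\lambda$ divides both $\pi^s$ and $\sigma-1$ in $\mathcal{O}_K[G]$, then writing $\sigma-1=\lambda\mu$ one gets $\mathrm{aug}(\mu)=0$, so $\mu=(\sigma-1)\mu'$, so $(\sigma-1)(1-\lambda\mu')=0$, so $1-\lambda\mu'\in\mathrm{ann}(\sigma-1)=\mathcal{O}_K\cdot N$; applying the augmentation shows $\mathrm{aug}(\lambda)$ is a unit (since $1-c\,p^m\in\mathcal{O}_K^{\times}$), hence $\lambda$ is a unit of the local ring $\mathcal{O}_K[G]$. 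With that lemma, and with generation actually established, your homological route would give a genuinely slicker uniqueness argument than the paper's inductive divisibility computation; but as written, the missing proof of generation means the proposal does not prove the theorem.
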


The key point in this work was the proof of the triviality of the cohomology groups $H^{i}(G,F(\mathfrak{p}_M))$ $i=0,-1$ for unramified extensions $M/L$. In its turn, our work is devoted to the generalization of the last result to the case of Honda formal groups. Namely, let $K_0/\mathbb{Q}_p$ be a finite extension such that $K/K_0$ is unramified, $\pi\in K_0$ be a uniformizer, $F$ be a Honda formal group over $\mathcal{O}_K$ relative to the extension $K/K_0$ of type $u\in \mathcal{O}_{K,\varphi}[[T]]$. We refer the reader to \cite[\S\S2,3]{Hon} and \cite{Dem1} for more information concerning Honda formal groups. Suppose $K^{\mathrm{alg}}$ is a fixed algebraic closure of the field $K$, $\mathfrak{p}_{K^{\mathrm{alg}}}$ is the valuation ideal, i.e. the set of all points in $K^{\mathrm{alg}}$ with positive valuation. Define $W^n_F=\ker[\pi^n]_F\subset F(\mathfrak{p}_{K^{\mathrm{alg}}})$ to be the $\pi^n$-torsion submodule. More precisely, let $W^n_F=\{x\in \mathfrak{p}_{K^{\mathrm{alg}}}|[\pi^n]_F(x)=0\}$, where $[\pi^n]_F\in \text{End}_{\mathcal{O}_K}(F)$, and let $W_F=\bigcup\limits_{n=1}^{\infty} W^n_F$.\newline
It is known (See  \cite[\S 2, Thm. 3]{Hon}) that there is a ring embedding $\mathcal{O}_{K_0}\hookrightarrow \text{End}_{\mathcal{O}_K}(F)$, which allows as to regard $F(\mathfrak{p}_M)$ as an $\mathcal{O}_{K_0}[G]$-module. In this paper, using generators and defining relations we describe the structure of this module in the case of unramified $p$-extension $M/L$, provided that $W_F\cap F(\mathfrak{p}_L)=W_F\cap F(\mathfrak{p}_M)=W^s_F$, for certain $s\geq 1$. It is known that any finite unramified extension of a local field is a cyclic extension, so that  $G$ is a cyclic $p$-group.

\begin{flushleft}
\textit{We agree in the following notation}
\end{flushleft}

\flushleft $n-$the degree of the field $L$ over $K_0$;\\
$h-$the height of the type $u=\pi+\sum_{i\geq 1}a_iT^i$ of the formal goup $F$, i.e. the minimal $h$, for which $a_h$ is invertible. \\
$f-$the logarithm of $F$;\\
$p^m-$the order of the group $G=\text{Gal}(M/L)$;\\
$\sigma-$a generating element of $G$;\\
$\zeta_i, 1\leq i\leq h-$a fixed basis of the $\mathcal{O}_{K_0}/\pi^s\mathcal{O}_{K_0}$-module $W^s_F$;\\
$k_0,l-$the residue fields of $K_0$ and $L$ respectively;\\
$q-$the order of $k_0$;\\
$x\underset{F}+y:=F(x,y)$;\\
$\displaystyle\sum_{F;i=1}^k x_i:=x_1\underset{F}+x_2\underset{F}+...\underset{F}+x_k.$\\

\subsection{Auxiliary lemmas}

\begin{lem}\label{lem2.7}
 The $\mathcal{O}_{K_0}$-module $W^n_F$ is isomorphic to $(\mathcal{O}_{K_0}/\pi^n\mathcal{O}_{K_0})^h$.
\end{lem}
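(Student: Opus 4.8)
The plan is to establish the $\mathcal{O}_{K_0}$-module structure abstractly and then pin down the elementary divisors by a cardinality count. Since $[\pi^n]_F$ annihilates $W^n_F$ by definition, the module $W^n_F$ is a finitely generated torsion module over the discrete valuation ring $\mathcal{O}_{K_0}$, in fact a module over the quotient $\mathcal{O}_{K_0}/\pi^n\mathcal{O}_{K_0}$. By the structure theorem for finitely generated modules over a principal ideal domain, $W^n_F\cong\bigoplus_{i=1}^{r}\mathcal{O}_{K_0}/\pi^{e_i}\mathcal{O}_{K_0}$ with $1\le e_i\le n$, so it remains to show that $r=h$ and that every $e_i$ equals $n$. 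Recalling that $|\mathcal{O}_{K_0}/\pi^n\mathcal{O}_{K_0}|=q^n$, I would extract both facts from the single computation $|W^n_F|=q^{hn}$ together with control of the $\pi$-torsion.

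The cornerstone is the identity $|W^1_F|=q^h$. Here I would use that the type $u=\pi+\sum_{i\geq1}a_iT^i$ has height $h$, which in Honda's theory forces the reduction $\overline{[\pi]_F}(X)$ modulo the maximal ideal $\mathfrak{p}_K$ of $\mathcal{O}_K$ to have all coefficients of degree $<q^h$ equal to zero and a unit coefficient at $X^{q^h}$. By the Weierstrass preparation theorem $[\pi]_F(X)=\varepsilon(X)\,P(X)$ with $\varepsilon$ a unit power series and $P$ a distinguished polynomial of degree $q^h$; since the roots of a distinguished polynomial all lie in $\mathfrak{p}_{K^{\mathrm{alg}}}$, this produces $q^h$ roots counted with multiplicity. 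To see that the roots are simple, hence that $|W^1_F|=q^h$, I would differentiate the homomorphism relation $[\pi]_F(F(X,Y))=F([\pi]_F(X),[\pi]_F(Y))$ with respect to $X$ and specialize at a root $\alpha\in W^1_F$ and $X=0$, obtaining $[\pi]_F'(\alpha)=\pi/\partial_1 F(0,\alpha)$, which is nonzero because $\partial_1 F(0,\alpha)\equiv1\pmod{\mathfrak{p}_{K^{\mathrm{alg}}}}$ is a unit.

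Next I would propagate this to all $n$ via the short exact sequences
\[0\longrightarrow W^1_F\longrightarrow W^n_F\overset{[\pi]_F}{\longrightarrow}W^{n-1}_F\longrightarrow 0.\]
Surjectivity follows from the fact that, for $y\in W^{n-1}_F$, the equation $[\pi]_F(X)=y$ reduces by Weierstrass preparation to a monic polynomial equation of positive degree whose constant term lies in $\mathfrak{p}_{K^{\mathrm{alg}}}$; hence it has a solution $x\in\mathfrak{p}_{K^{\mathrm{alg}}}$ in the algebraically closed field $K^{\mathrm{alg}}$, and $[\pi^n]_F(x)=[\pi^{n-1}]_F(y)=0$ places $x\in W^n_F$. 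Exactness in the middle is immediate, since $\ker\big([\pi]_F|_{W^n_F}\big)=W^n_F\cap\ker[\pi]_F=W^1_F$. Counting orders and inducting on $n$ then yields $|W^n_F|=|W^1_F|^n=q^{hn}$.

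Finally I would combine the two computations. The $\pi$-torsion submodule of $W^n_F$ is exactly $W^1_F$, which as a vector space over $k_0=\mathcal{O}_{K_0}/\pi\mathcal{O}_{K_0}$ has dimension $h$; in the decomposition above the number of summands equals $\dim_{k_0}W^n_F[\pi]$, so $r=h$. Then $q^{hn}=|W^n_F|=q^{\sum_{i=1}^{h}e_i}$ gives $\sum_{i=1}^{h}e_i=hn$, and since each $e_i\le n$ this forces $e_i=n$ for every $i$, i.e. $W^n_F\cong(\mathcal{O}_{K_0}/\pi^n\mathcal{O}_{K_0})^h$. I expect the main obstacle to be the first computation $|W^1_F|=q^h$: the remaining arguments are formal module theory and a standard Newton-polygon surjectivity, but identifying the reduction of $[\pi]_F$ with a series whose lowest nonvanishing term sits in degree $q^h$ genuinely uses the height of the Honda type together with the link between $u$, the logarithm $f$, and the endomorphism $[\pi]_F$.
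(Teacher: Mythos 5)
Your argument is correct, but it takes a genuinely different route from the paper for the simple reason that the paper gives no argument at all: its entire proof of this lemma is the citation ``See \cite[Prop.~1]{Dem1}.'' What you have done is reconstruct a self-contained proof of the cited result. Your chain of steps --- Weierstrass preparation applied to $[\pi]_F$, the count $|W^1_F|=q^h$ with simplicity of roots via the identity $[\pi]_F'(\alpha)\cdot\partial_1F(0,\alpha)=\pi$, exactness of $0\to W^1_F\to W^n_F\xrightarrow{[\pi]_F} W^{n-1}_F\to 0$, the resulting count $|W^n_F|=q^{hn}$, and finally the structure theorem combined with $\dim_{k_0}W^n_F[\pi]=h$ to force every elementary divisor to equal $n$ --- is sound; I checked the derivative computation and the surjectivity step in particular, and both work (for surjectivity one applies Weierstrass preparation to $[\pi]_F(X)-y$ over the complete ring $\mathcal{O}_{K(y)}$). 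Two caveats. First, the one non-formal input --- that height $h$ of the Honda type forces $\overline{[\pi]_F}$ to have all coefficients below degree $q^h$ zero and a unit coefficient at $X^{q^h}$, i.e.\ that the reduction of $F$ has $\mathcal{O}_{K_0}$-height $h$ --- you assert rather than prove, flagging it honestly as the main obstacle. The fact is true and standard, but it carries essentially all of the Honda-theoretic content (it is exactly the point at which \cite{Hon} or \cite{Dem1} must be invoked, via the relation $u*f\equiv 0\pmod{\pi}$ between the type and the logarithm), so your proof is correct only relative to that citation; in this sense your approach buys transparency and self-containedness everywhere except at the same place where the paper outsources everything. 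Second, a minor ordering point: you invoke the structure theorem for finitely generated torsion $\mathcal{O}_{K_0}$-modules before the finiteness of $W^n_F$ is available; finiteness only emerges from your later counting, which fortunately does not depend on the structure theorem, so the argument should simply be rearranged (count first, decompose second) and is not circular.
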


\begin{proof}
See \cite[Prop. 1]{Dem1}. 
\end{proof}

\begin{lem}\label{lem2.8}
In the case of an unramified extension $M/L$, the groups $H^i(G,F(\mathfrak{p}_M))$ are trivial for $i=0,-1$.
\end{lem}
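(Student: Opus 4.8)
The plan is to prove the stronger statement that $F(\mathfrak{p}_M)$ is a cohomologically trivial $G$-module; since $G$ is a cyclic $p$-group its Tate cohomology is $2$-periodic, so the two groups $H^0$ and $H^{-1}$ in the statement already capture everything, and it is enough to control the associated graded of a suitable filtration and then bootstrap. First I would equip $F(\mathfrak{p}_M)$ with the decreasing filtration by the subsets $F(\mathfrak{p}_M^i)$, $i\geq 1$. Because $F(x,y)=x+y+(\text{higher order terms})$, and both the formal addition and the Galois action preserve the valuation, each $F(\mathfrak{p}_M^i)$ is a $G$-submodule, one has $\bigcap_i F(\mathfrak{p}_M^i)=0$, and the natural reduction map is a $G$-isomorphism $F(\mathfrak{p}_M^i)/F(\mathfrak{p}_M^{i+1})\cong \mathfrak{p}_M^i/\mathfrak{p}_M^{i+1}$ onto the ordinary additive quotient.

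Next I would show that every graded piece is cohomologically trivial, and here the unramifiedness of $M/L$ is essential. A uniformizer $\pi_L$ of $L$ remains a uniformizer of $M$, so multiplication by $\pi_L^i$ gives a $G$-isomorphism $\mathfrak{p}_M^i/\mathfrak{p}_M^{i+1}\cong l_M$, where $l_M$ denotes the residue field of $M$ with its natural $G=\mathrm{Gal}(l_M/l)$-action. By the normal basis theorem for the (cyclic, hence separable) extension $l_M/l$, the field $l_M$ is a free $l[G]$-module of rank one, and therefore a direct sum of copies of $\mathbb{F}_p[G]=\mathrm{Ind}_{\{1\}}^{G}\mathbb{F}_p$. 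Induced modules have trivial Tate cohomology by Shapiro's lemma, so $H^i(G,\,\mathfrak{p}_M^i/\mathfrak{p}_M^{i+1})=0$ for every $i$, in particular for $i=0,-1$.

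Finally I would pass from the graded pieces to the whole module by successive approximation, using the completeness of $\mathcal{O}_M$. For $H^0$ this is the surjectivity of the norm $N_G\colon F(\mathfrak{p}_M)\to F(\mathfrak{p}_M)^G=F(\mathfrak{p}_L)$: given $\beta\in F(\mathfrak{p}_L)$ I would solve $N_G\alpha\equiv\beta$ modulo successively deeper layers, each step being solvable precisely because $N_G$ is onto the fixed points of the graded piece, via the identification $(\mathfrak{p}_M^i/\mathfrak{p}_M^{i+1})^G=\mathfrak{p}_L^i/\mathfrak{p}_L^{i+1}$; the partial solutions then converge to a genuine preimage. The case $H^{-1}$ is handled symmetrically, writing an element killed by $N_G$, layer by layer, in the form $\sigma(\gamma)\underset{F}-\gamma$. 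The main obstacle I anticipate is exactly this limiting argument: one must verify that the formal-group norm and the operator $\sigma-1$ interact compatibly with the filtration at each stage, and that the successive corrections land in strictly deeper layers so that the infinite formal sums converge in $F(\mathfrak{p}_M)$. Once the layerwise vanishing of $H^0$ and $H^{-1}$ is established, this convergence is the only genuinely technical point.
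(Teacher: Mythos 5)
Your proposal is correct and follows essentially the same route as the proof the paper relies on: the paper does not prove Lemma \ref{lem2.8} itself, but delegates it to \cite{Nek7} and \cite[\S 3]{Borevich}, where exactly this filtration argument is carried out. Namely, one filters $F(\mathfrak{p}_M)$ by the $G$-submodules $F(\mathfrak{p}_M^i)$, identifies each graded quotient (via unramifiedness and the normal basis theorem) with the residue field of $M$ as an induced, hence cohomologically trivial, $G$-module, and then kills the obstruction layer by layer by successive approximation using completeness --- precisely your three steps.
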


\begin{lem}\label{lem2.9}
If the elements $x_1,x_2,..., x_k$ from $F(\mathfrak{p}_M)$ are such that the system $\{ N_{F(\mathfrak{p}_M)}(x_i), 1\leq i\leq k \}$ \footnote{ Here $N_{F(\mathfrak{p}_M)}$ is the $G$-module norm} is linearly independent in the $k_0$-vector space $F(\mathfrak{p}_M)/[\pi]_F(F(\mathfrak{p}_M)$, then so is the system $\{x^{\sigma^j}_i, 1\leq i\leq k,0\leq j\leq p^m-1\}$.

\end{lem}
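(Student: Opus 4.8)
The plan is to recast the whole statement as a question about modules over the group algebra $R:=k_0[G]$ acting on the $k_0$-vector space $V:=F(\mathfrak{p}_M)/[\pi]_F(F(\mathfrak{p}_M))$. First I would verify that this action is well defined: since $\sigma\in G=\mathrm{Gal}(M/L)$ fixes the coefficients of $[\pi]_F$ (they lie in $\mathcal{O}_K\subseteq\mathcal{O}_L$), the operator $\sigma$ commutes with multiplication by $\pi$, so the submodule $[\pi]_F(F(\mathfrak{p}_M))$ is $G$-stable and the $G$-action descends to a $k_0$-linear action on $V$. Writing $\bar{x}$ for the class of $x$ in $V$, the image of the norm $N_{F(\mathfrak{p}_M)}(x_i)=\sum_{F;j=0}^{p^m-1}x_i^{\sigma^j}$ in $V$ is exactly $N\cdot\bar{x_i}$, where $N=\sum_{j=0}^{p^m-1}\sigma^j\in R$, and the conclusion to be proved is that the $p^m k$ elements $\sigma^j\bar{x_i}$ are $k_0$-linearly independent.

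The structural input is that $G$ is a cyclic $p$-group of order $p^m$ and $\mathrm{char}\,k_0=p$. Setting $\tau:=\sigma-1$, one obtains an isomorphism $R\cong k_0[\tau]/(\tau^{p^m})$, a local chain ring whose only ideals are the $\tau^j R$ for $0\le j\le p^m$ and whose socle $\tau^{p^m-1}R$ is one-dimensional over $k_0$. The identity $\sigma^{p^m}-1=(\sigma-1)^{p^m}$ (freshman's dream in characteristic $p$) yields $N=\tfrac{\sigma^{p^m}-1}{\sigma-1}=\tau^{p^m-1}$, so the norm element is precisely a socle generator. Thus the hypothesis says exactly that $\tau^{p^m-1}\bar{x_1},\dots,\tau^{p^m-1}\bar{x_k}$ are $k_0$-linearly independent.

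The heart of the argument is to show that the $R$-submodule $W:=\sum_i R\bar{x_i}\subseteq V$ is free of rank $k$ on $\bar{x_1},\dots,\bar{x_k}$, i.e. that the canonical surjection $R^k\twoheadrightarrow W$ is injective. I would argue by contradiction. Given a nonzero relation $\sum_i r_i\bar{x_i}=0$, write each nonzero $r_i=\tau^{e_i}u_i$ with $u_i\in R^{\times}$ (possible since $R$ is a chain ring), set $e=\min_i e_i$, and multiply the relation by $\tau^{p^m-1-e}$. Every term with $e_i>e$ dies, since its $\tau$-exponent reaches $p^m$; and since $u_i\equiv c_i\pmod{\tau}$ with $c_i\in k_0^{\times}$ one has $\tau^{p^m-1}u_i=c_i\tau^{p^m-1}$, so the surviving terms give $\sum_{i:\,e_i=e}c_i\,\tau^{p^m-1}\bar{x_i}=0$. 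This is a nontrivial $k_0$-relation among the socle images (the index set is nonempty and every $c_i\ne 0$), contradicting the hypothesis. Hence the relation was trivial, $W$ is free of rank $k$, and $\dim_{k_0}W=p^m k$.

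Finally, the bases $\{\sigma^j\}_{0\le j\le p^m-1}$ and $\{\tau^j\}_{0\le j\le p^m-1}$ of $R$ over $k_0$ are related by a unipotent (in particular invertible) change of basis, coming from $\sigma^j=(1+\tau)^j=\sum_l\binom{j}{l}\tau^l$. Since $\{\tau^j\bar{x_i}\}$ is a $k_0$-basis of the free module $W$, the $p^m k$ elements $\{\sigma^j\bar{x_i}\}$ also span the $p^m k$-dimensional space $W$ and therefore form a basis; in particular they are $k_0$-linearly independent, which is the assertion. I expect the main obstacle to be the freeness step: it is where the chain-ring structure of $k_0[G]$ and the identification of the norm with a socle generator are used essentially, and one must carry out the minimal-exponent bookkeeping with care. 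The surrounding setup and the concluding change-of-basis count are purely formal by comparison.
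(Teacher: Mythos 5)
Your proof is correct, and it is worth noting that the paper itself contains no proof of Lemma \ref{lem2.9} to compare against: the authors refer to \cite{Nek7} and \cite[\S 3]{Borevich}, which prove the analogous statements for the multiplicative group and for Lubin--Tate formal groups, so a self-contained argument in the Honda setting is genuinely useful. Your mechanism --- viewing $V=F(\mathfrak{p}_M)/[\pi]_F(F(\mathfrak{p}_M))$ as a module over $R=k_0[G]\cong k_0[\tau]/(\tau^{p^m})$ with $\tau=\sigma-1$, recognizing the norm element as the socle generator $\tau^{p^m-1}$, and showing that the $R$-submodule generated by the $\bar x_i$ is free of rank $k$ --- is exactly the classical one underlying the cited proofs (the chain-ring structure of the group algebra of a cyclic $p$-group in characteristic $p$), and all steps check out: the $G$-action does descend to $V$ because the coefficients of $F$ and of $[a]_F$ lie in $\mathcal{O}_K\subseteq\mathcal{O}_L$; every nonzero element of $R$ is $\tau^e$ times a unit; and your minimal-exponent argument correctly converts a nontrivial relation $\sum_i r_i\bar x_i=0$ into a nontrivial $k_0$-relation among the elements $\tau^{p^m-1}\bar x_i$, contradicting the hypothesis on the norms. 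Two small improvements. First, the equality $N=\frac{\sigma^{p^m}-1}{\sigma-1}=\tau^{p^m-1}$ should not be read as a division inside $R$, where $\sigma-1$ is a zero-divisor; either invoke the polynomial identity $\sum_{j=0}^{p^m-1}X^j=(X-1)^{p^m-1}$ in $k_0[X]$ (valid since $\mathrm{char}\,k_0=p$) and then specialize $X\mapsto\sigma$, or compute directly $N=\sum_{l=0}^{p^m-1}\binom{p^m}{l+1}\tau^l$ and use that $p\mid\binom{p^m}{r}$ for $0<r<p^m$. Second, the closing change-of-basis step can be dispensed with: the group elements $\sigma^j$, $0\le j\le p^m-1$, already form a $k_0$-basis of $R$, so the $k_0$-linear independence of $\{\sigma^j\bar x_i\}$ is, by definition of the group algebra, the statement that no nonzero tuple $(r_i)\in R^k$ satisfies $\sum_i r_i\bar x_i=0$ --- which is precisely the freeness your minimal-exponent argument establishes.
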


\begin{lem}\label{lem2.10}
If the elements $x_1,x_2,..., x_k$ from $F(\mathfrak{p}_M)$ generate the $k_0$-vector space $F(\mathfrak{p}_M)/[\pi]_F(F(\mathfrak{p}_M)$, then they generate $F(\mathfrak{p}_M)$ as an $\mathcal{O}_{K_0}$-module.

\end{lem}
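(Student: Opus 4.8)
\emph{Plan.} The statement is a topological version of Nakayama's lemma, in which the endomorphism $[\pi]_F$ plays the role of multiplication by the maximal ideal and completeness of the local field $M$ replaces finite generation. Write $N$ for the $\mathcal{O}_{K_0}$-submodule of $F(\mathfrak{p}_M)$ generated by $x_1,\dots,x_k$; the goal is to show $N=F(\mathfrak{p}_M)$. I would observe first that $[\pi]_F\bigl(F(\mathfrak{p}_M)\bigr)$ is exactly the kernel of the reduction map onto the $k_0$-vector space $F(\mathfrak{p}_M)/[\pi]_F(F(\mathfrak{p}_M))$ (the quotient is a $k_0$-space because $[\pi]_F$, i.e.\ multiplication by $\pi$, acts as $0$ on it). Hence the hypothesis that the images of the $x_i$ span this quotient translates into the equality $F(\mathfrak{p}_M)=N\underset{F}+[\pi]_F(F(\mathfrak{p}_M))$.

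The second step is to iterate this identity. Since $[\pi]_F$ is an endomorphism of $F(\mathfrak{p}_M)$ and $N$ is stable under $[\pi]_F$ (being an $\mathcal{O}_{K_0}$-module), applying $[\pi]_F$ to the previous equality gives $[\pi]_F(F(\mathfrak{p}_M))\subseteq N\underset{F}+[\pi^2]_F(F(\mathfrak{p}_M))$, and an induction yields
\[
F(\mathfrak{p}_M)=N\underset{F}+[\pi^n]_F\bigl(F(\mathfrak{p}_M)\bigr)\qquad\text{for every }n\geq 1.
\]
Thus any $y\in F(\mathfrak{p}_M)$ can be expanded step by step as $y=a_0\underset{F}+[\pi]_F a_1\underset{F}+\dots\underset{F}+[\pi^{n-1}]_F a_{n-1}\underset{F}+[\pi^n]_F y_n$, with each $a_i\in N$ and each $y_n\in F(\mathfrak{p}_M)$.

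The third step is a convergence argument. From $[\pi]_F(x)=\pi x+(\text{higher order terms})$ one checks that each application of $[\pi]_F$ strictly raises the $M$-valuation, so $v_M([\pi^n]_F y_n)\to\infty$ and the remainder tends to $0$ in the complete field $M$. Hence the series $y=\displaystyle\sum_{F;i\geq 0}[\pi^i]_F a_i$ converges in $F(\mathfrak{p}_M)$. Writing each $a_i=\sum_{F;j=1}^k[c_{ij}]_F x_j$ with $c_{ij}\in\mathcal{O}_{K_0}$ and regrouping by $j$, I obtain $y=\sum_{F;j=1}^k[b_j]_F x_j$ with $b_j=\sum_{i\geq 0}\pi^i c_{ij}$, a series converging in the complete ring $\mathcal{O}_{K_0}$. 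Since the map $\mathcal{O}_{K_0}^{\,k}\to F(\mathfrak{p}_M)$ given by $(b_j)\mapsto\sum_{F;j=1}^k[b_j]_F x_j$ is continuous with compact domain, its image $N$ is closed, so $y\in N$; as $y$ was arbitrary, $N=F(\mathfrak{p}_M)$.

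The main obstacle will be the bookkeeping in the third step: one must verify carefully that $[\pi]_F$ raises valuations (so that the remainder terms vanish), that the formal group addition is continuous so the infinite $\underset{F}+$-sums are well defined, and that the double-indexed sum may be rearranged into a single $\mathcal{O}_{K_0}$-linear combination. The remaining ingredients---completeness of $M$ and of $\mathcal{O}_{K_0}$ together with compactness of $\mathcal{O}_{K_0}^{\,k}$---are standard and ensure that the convergent combination indeed lies in the module $N$ generated by $x_1,\dots,x_k$.
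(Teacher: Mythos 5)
Your proof is correct, and it follows a genuinely different route from the one the paper records. The paper gives no self-contained argument for this lemma: it refers to \cite{Nek7} and \cite[\S 3]{Borevich}, and then, after Lemma \ref{lem2.12}, notes in a remark that the statement also follows from the classical Nakayama lemma, because the proof of Lemma \ref{lem2.12} shows that $F(\mathfrak{p}_M)$ is a \emph{finitely generated} $\mathcal{O}_{K_0}$-module (via the logarithm isomorphism $f:F(\mathfrak{p}^i_M)\DistTo\mathfrak{p}^i_M$ for large $i$ and the finite-index argument). Your argument is instead a ``topological Nakayama'': it trades finite generation for completeness. Iterating the spanning hypothesis gives $F(\mathfrak{p}_M)=N\underset{F}+[\pi^n]_F(F(\mathfrak{p}_M))$ for every $n$; since $[\pi]_F(x)=\pi x+\dots$ forces $v_M([\pi]_F(x))\geq v_M(x)+1$ for $x\in\mathfrak{p}_M$, the remainders $[\pi^n]_F(y_n)$ tend to $0$ uniformly, so the series converges in the complete field $M$; and finally $y\in N$ follows either from your explicit regrouping into $\sum_{F;j}[b_j]_F x_j$ (whose key point, continuity of $a\mapsto[a]_F(x)$ in $a$, is justified by $[\pi^n a]_F=[\pi^n]_F\circ[a]_F$) or from your observation that $N$ is closed as the continuous image of the compact set $\mathcal{O}_{K_0}^{\,k}$ --- note that these two finishes are redundant, and either one alone completes the proof. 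The trade-off between the two approaches: yours is self-contained and elementary, requiring nothing about the abstract module structure of $F(\mathfrak{p}_M)$ (no logarithm, no structure theory of finitely generated modules over a PID), only ultrametric estimates and completeness; the paper's remark-proof comes essentially for free once Lemma \ref{lem2.12} is in place, since the finite-generation input is established there anyway for the dimension count $\dim_{k_0}F(\mathfrak{p}_L)/[\pi]_F(F(\mathfrak{p}_L))=n+h$.
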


The proofs of lemmas 2.8-2.10 can be found in the article \cite{Nek7}, as well as in \cite[\S 3]{Borevich}.
\begin{lem}\label{lem2.11}
The natural linear map $$\varphi:F(\mathfrak{p}_L)/[\pi]_F (F(\mathfrak{p}_L))\rightarrow F(\mathfrak{p}_M)/[\pi]_F (F(\mathfrak{p}_M))$$ of $k_0$-vector spaces, induced by inclusion, has kernel of dimension $h$.

\end{lem}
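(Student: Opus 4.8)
The plan is to identify $\ker\varphi$ with a group of homomorphisms $\mathrm{Hom}(G,W^1_F)$ and to compute the dimension of the latter directly. First I would unwind the definition: an element of $F(\mathfrak p_L)$ dies in $F(\mathfrak p_M)/[\pi]_F(F(\mathfrak p_M))$ precisely when it already lies in $[\pi]_F(F(\mathfrak p_M))$, so
$$\ker\varphi=\bigl(F(\mathfrak p_L)\cap[\pi]_F(F(\mathfrak p_M))\bigr)\big/[\pi]_F(F(\mathfrak p_L)).$$
The hypothesis $W^1_F\subseteq W^s_F=W_F\cap F(\mathfrak p_L)\subseteq F(\mathfrak p_L)$ means the full $\pi$-torsion $W^1_F$ sits inside $F(\mathfrak p_L)$ and is therefore fixed pointwise by $G$; this is what will turn the cocycles below into honest homomorphisms.

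The central construction is a map $\Phi$ on $F(\mathfrak p_L)\cap[\pi]_F(F(\mathfrak p_M))$. Given such an $x$, I choose $y\in F(\mathfrak p_M)$ with $[\pi]_F(y)=x$ and set $\phi_x(g)=y^{g}\underset{F}-y$. Since $x$ is $G$-fixed and $[\pi]_F$ has coefficients in $K\subseteq L$, one has $[\pi]_F\bigl(y^{g}\underset{F}-y\bigr)=x^{g}\underset{F}-x=0$, so $\phi_x(g)\in W^1_F$; the trivial $G$-action on $W^1_F$ makes the cocycle identity say exactly that $\phi_x\colon G\to W^1_F$ is a homomorphism, and replacing $y$ by the only other preimages (which differ by elements of $\ker([\pi]_F\mid F(\mathfrak p_M))=W^1_F$, all $G$-fixed) leaves $\phi_x$ unchanged. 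I would then check the routine points: $\Phi$ is additive and satisfies $\Phi([c]_F x)=\bar c\,\Phi(x)$, so it induces a $k_0$-linear map on the quotient, and its kernel is exactly $[\pi]_F(F(\mathfrak p_L))$ (if $\phi_x=0$ then a preimage $y$ is $G$-fixed, hence lies in $F(\mathfrak p_M)^G=F(\mathfrak p_L)$). This yields an injection $\bar\Phi\colon\ker\varphi\hookrightarrow\mathrm{Hom}(G,W^1_F)$.

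Surjectivity of $\bar\Phi$ is the main point, and it is where I expect the difficulty to sit. Given any homomorphism $\phi\colon G\to W^1_F$, I regard it as a $1$-cocycle of $G$ valued in $F(\mathfrak p_M)$ (legitimate precisely because $W^1_F$ is $G$-fixed). Its class lives in $H^1(G,F(\mathfrak p_M))$; since $G$ is cyclic, Tate cohomology is $2$-periodic, so $H^1(G,F(\mathfrak p_M))\cong\hat H^{-1}(G,F(\mathfrak p_M))$, which vanishes by Lemma \ref{lem2.8}. Hence $\phi$ is a coboundary: there is $y\in F(\mathfrak p_M)$ with $y^{g}\underset{F}-y=\phi(g)$ for all $g$. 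Putting $x=[\pi]_F(y)$ gives $x^{g}\underset{F}-x=[\pi]_F(\phi(g))=0$, so $x\in F(\mathfrak p_L)\cap[\pi]_F(F(\mathfrak p_M))$ with $\Phi(x)=\phi$, establishing surjectivity.

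Finally I would compute the dimension. By Lemma \ref{lem2.7} (with $n=1$) $W^1_F\cong k_0^{\,h}$, a vector space of exponent $p$; since $G$ is cyclic of order $p^m$, every element of $W^1_F$ is annihilated by $|G|$, so evaluation at the generator $\sigma$ gives a $k_0$-linear isomorphism $\mathrm{Hom}(G,W^1_F)\xrightarrow{\sim}W^1_F$, of dimension $h$. Combined with the isomorphism $\bar\Phi$, this yields $\dim_{k_0}\ker\varphi=h$, as claimed. The only genuinely nontrivial input is the vanishing of $H^1(G,F(\mathfrak p_M))$, which I import from Lemma \ref{lem2.8} via the cyclicity of $G$; the rest is bookkeeping with the explicit cocycle $\phi_x$.
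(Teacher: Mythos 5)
Your proof is correct, and it packages the same underlying cohomological fact in a genuinely different way from the paper. The paper argues with explicit elements: it takes $\eta_i=[\pi^{s-1}]_F\zeta_i$, $1\leq i\leq h$, as a basis of $W^1_F$, uses Lemma \ref{lem2.8} to solve $\eta_i=t_i^{\sigma}\underset{F}-t_i$ with $t_i\in F(\mathfrak{p}_M)$, and then, given $x=[\pi]_F(y)\in F(\mathfrak{p}_L)$, expands $y^{\sigma}\underset{F}-y=\sum_{F;i}[a_i]_F(\eta_i)$ to conclude that $x$ lies, modulo $[\pi]_F(F(\mathfrak{p}_L))$, in the span of the classes of the $[\pi]_F(t_i)$, which it then declares a basis of $\ker\varphi$. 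In your terms, $\bar\Phi([\pi]_F(t_i))$ is precisely the homomorphism $\sigma\mapsto\eta_i$, so the paper is constructing the inverse of your isomorphism $\bar\Phi$ on a basis; both proofs rest on the single nontrivial input, Lemma \ref{lem2.8}. The trade-offs: your route needs the periodicity of Tate cohomology of cyclic groups to identify $H^1$ with $H^{-1}$, an extra (standard) ingredient the paper avoids by never leaving the concrete $\sigma-1$/norm picture --- for cyclic $G$ a homomorphism $G\to W^1_F$ is just a choice of norm-zero value at $\sigma$, norm-zero being automatic here since $W^1_F$ is $G$-fixed and killed by $[\pi]_F$. In return, your injectivity step ($\phi_x=0$ forces a preimage $y$ into $F(\mathfrak{p}_M)^G=F(\mathfrak{p}_L)$, hence $x\in[\pi]_F(F(\mathfrak{p}_L))$) explicitly delivers the lower bound $\dim_{k_0}\ker\varphi\geq h$, a point the paper's write-up glosses over: it proves spanning and asserts the basis property without checking independence of the $[\pi]_F(t_i)$ modulo $[\pi]_F(F(\mathfrak{p}_L))$, even though it is exactly this lower bound that the proof of Corollary \ref{cor2.1} ($h\leq n$) consumes. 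One thing the paper's formulation buys that yours does not: the explicit elements $\eta_i$ introduced there are reused in the proof of the main theorem.
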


\begin{proof}
Consider the elements $\eta_i=[\pi^{s-1}]_F\zeta_i, 1\leq i\leq h$. They form a basis of $W^1_F$ as an $\mathcal{O}_{K_0}/\pi\mathcal{O}_{K_0}$-module. Since $N_{F(\mathfrak{p}_M)}\eta_i=[p^m]_F\eta_i=0$, then by Lemma \ref{lem2.8} we get that $\eta_i=t^{\sigma}_i\underset{F}-t_i$ for some elements $t_i\in F(\mathfrak{p}_M)$. Suppose that $x\in F(\mathfrak{p}_L)$ and $x=[\pi]_F(y)$ for some  
$y\in F(\mathfrak{p}_M)$. Then $[\pi]_F(y^{\sigma}\underset{F}-y)=x^{\sigma}\underset{F}-x=0$, from which it follows that $$y^{\sigma}\underset{F}-y=\sum_{F;i=1}^h[a_i]_F(\eta_i)=\sum_{F;i=1}^h\left(([a_i]_F(t_i))^{\sigma}\underset{F}-{[a_i]_F(t_i)}\right),$$ for certain elements $a_i\in \mathcal{O}_{K_0}$, uniquely determined modulo $\pi$. The last relationship indicates the existence of $z\in F(\mathfrak{p}_L)$, for which 
$y=\displaystyle\sum_{F;i=1}^h[a_i]_F(t_i)\underset{F}+z$. Therefore, $$x=[\pi]_F(y)=\sum_{F;i=1}^h[a_i]_F([\pi]_F(t_i))\underset{F}+[\pi]_F(z).$$ Hence the elements $[\pi]_F(t_i),1\leq i\leq h$ constitute a basis of $\ker\varphi$. The lemma is proved.
\end{proof}

\begin{lem}\label{lem2.12}
The dimension of the $k_0$-vector space $F(\mathfrak{p}_L)/[\pi]_F (F(\mathfrak{p}_L))$ is equal to $n+h$.
\end{lem}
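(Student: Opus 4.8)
The plan is to determine the full $\mathcal{O}_{K_0}$-module structure of $F(\mathfrak{p}_L)$ and then read off the $k_0$-dimension of the quotient by $[\pi]_F$. First I would record that, under the embedding $\mathcal{O}_{K_0}\hookrightarrow\mathrm{End}_{\mathcal{O}_K}(F)$, the endomorphism $[\pi]_F$ is precisely multiplication by the uniformizer $\pi$ in the $\mathcal{O}_{K_0}$-module $F(\mathfrak{p}_L)$. Hence $[\pi]_F(F(\mathfrak{p}_L))=\pi\,F(\mathfrak{p}_L)$, and the quotient $F(\mathfrak{p}_L)/[\pi]_F(F(\mathfrak{p}_L))$ is naturally a vector space over $k_0=\mathcal{O}_{K_0}/\pi\mathcal{O}_{K_0}$.

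Next I would observe that $F(\mathfrak{p}_L)$ is finitely generated over $\mathcal{O}_{K_0}$: via the formal logarithm it is finitely generated over $\mathbb{Z}_p$, and $\mathcal{O}_{K_0}$ is a finite extension of $\mathbb{Z}_p$. Since $\mathcal{O}_{K_0}$ is a complete discrete valuation ring, hence a principal ideal domain, the structure theorem yields $F(\mathfrak{p}_L)\cong T\oplus\mathcal{O}_{K_0}^{\,r}$, where $T$ is the $\mathcal{O}_{K_0}$-torsion submodule and $r$ is the free rank. Reducing modulo $\pi$ then gives $\dim_{k_0}F(\mathfrak{p}_L)/\pi F(\mathfrak{p}_L)=\dim_{k_0}(T/\pi T)+r$, so it remains to compute the two summands.

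For the torsion part, an element is $\mathcal{O}_{K_0}$-torsion exactly when it is annihilated by some $[\pi^k]_F$ (writing a nonzero scalar as a unit times a power of $\pi$), so $T=W_F\cap F(\mathfrak{p}_L)$, which by our standing hypothesis equals $W^s_F$. By Lemma \ref{lem2.7} we have $W^s_F\cong(\mathcal{O}_{K_0}/\pi^s\mathcal{O}_{K_0})^h$, and since $s\geq 1$ reduction modulo $\pi$ gives $T/\pi T\cong k_0^{\,h}$, of dimension $h$. For the free rank I would use the logarithm $f$ of $F$, which satisfies $f(x\underset{F}+y)=f(x)+f(y)$ and $f([a]_F(x))=af(x)$ for $a\in\mathcal{O}_{K_0}$: for $N$ large it restricts to an isomorphism of $\mathcal{O}_{K_0}$-modules $F(\mathfrak{p}^N_L)\xrightarrow{\sim}(\mathfrak{p}^N_L,+)$. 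As $\mathfrak{p}^N_L\cong\mathcal{O}_L$ is free over $\mathcal{O}_{K_0}$ of rank $[L:K_0]=n$, and $F(\mathfrak{p}^N_L)$ has finite index in $F(\mathfrak{p}_L)$, the free rank is $r=n$. Combining the two contributions gives the asserted value $n+h$.

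The main obstacle is the clean computation of the free rank $r=n$: one must make precise that the logarithm, whose coefficients lie in $K$ and may have denominators, converges and becomes an $\mathcal{O}_{K_0}$-linear isomorphism on a sufficiently deep congruence subgroup $F(\mathfrak{p}^N_L)$, and that passing from this finite-index subgroup back to $F(\mathfrak{p}_L)$ leaves the rank unchanged. An alternative that sidesteps tracking the logarithm's domain of convergence is to compute ranks over $\mathbb{Z}_p$: the standard fact $\mathrm{rank}_{\mathbb{Z}_p}F(\mathfrak{p}_L)=[L:\mathbb{Q}_p]$ together with $\mathrm{rank}_{\mathbb{Z}_p}\mathcal{O}_{K_0}=[K_0:\mathbb{Q}_p]$ forces $r\,[K_0:\mathbb{Q}_p]=[L:\mathbb{Q}_p]=n\,[K_0:\mathbb{Q}_p]$, whence $r=n$.
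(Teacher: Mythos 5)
Your proof is correct and follows essentially the same route as the paper's: both use the formal logarithm to identify a deep congruence subgroup $F(\mathfrak{p}^N_L)$ with a free $\mathcal{O}_{K_0}$-module of rank $n$, pass to $F(\mathfrak{p}_L)$ by a finite-index argument, invoke the structure theorem over the PID $\mathcal{O}_{K_0}$ to split off the torsion $T=W^s_F$ (of dimension $h$ modulo $\pi$ by Lemma \ref{lem2.7}), and add the two contributions. Your closing remark on justifying the logarithm's convergence is exactly what the paper delegates to \cite[Chapter 4, Thm.\ 6.4]{Sil}, and your alternative $\mathbb{Z}_p$-rank computation is a harmless variant of the same step.
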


\begin{proof}
According to \cite[Chapter 4, Thm. 6.4]{Sil} for $i>\frac{e(L/\mathbb{Q}_p)}{p-1}$ there is an isomorphism of groups  $f:F(\mathfrak{p}^i_L)\DistTo \mathfrak{p}^i_L$, which is in fact an isomorphism of $\mathcal{O}_{K_0}$-modules due to the relation $f\circ [a]_F=af$ which holds for all $a\in \mathcal{O}_{K_0}$.  Consequently, $F(\mathfrak{p}^i_L)$ is a free $\mathcal{O}_{K_0}$- module of rank $n$. From the exactness of sequences of $\mathcal{O}_{K_0}$-modules: $$0\rightarrow F(\mathfrak{p}^{i+1}_L)\rightarrow F(\mathfrak{p}^i_L)\rightarrow l\rightarrow 0,\ i\geq 1$$ it follows that $F(\mathfrak{p}^i_L)$ is an $\mathcal{O}_{K_0}$-submodule of finite index in $F(\mathfrak{p}_L)$. Therefore $F(\mathfrak{p}_L)$ is a finitely generated $\mathcal{O}_{K_0}$-module of rank $n$. The theory of finitely generated modules over a PID yields $F(\mathfrak{p}_L)=T\oplus A$, where $T$ is the torsion submodule, which in our case coincides with $W^s_F$, while $A$ is a free $\mathcal{O}_{K_0}$-module of rank $n$.  In the long run, we get $$|F(\mathfrak{p}_L)/[\pi]_F (F(\mathfrak{p}_L))|=|T/[\pi]_F T|\cdot|A/[\pi]_F A|=q^h\cdot q^n=q^{n+h},$$ completing the proof of the lemma. 

\end{proof}

\begin{rem}\label{rem2.1}
Likewise we get that $\dim_{k_0}\left(F(\mathfrak{p}_M)/[\pi]_F (F(\mathfrak{p}_M))\right)=np^m+h$. 
\end{rem}

\begin{rem}
Since $F(\mathfrak{p}_M)$ is a finitely generated $\mathcal{O}_{K_0}$- module, then by Nakayama's lemma we obtain a new proof of the assertion of Lemma \ref{lem2.10}.
\end{rem}

\begin{lem}\label{lem2.13}
The elements $\zeta_i,1\leq i\leq h$ are linearly independent modulo $\ker\varphi.$

\end{lem}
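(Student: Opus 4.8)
The plan is to unwind the statement into a concrete divisibility assertion and then argue entirely inside the torsion submodule of $F(\mathfrak{p}_M)$. Since $\varphi$ is induced by the inclusion $F(\mathfrak{p}_L)\hookrightarrow F(\mathfrak{p}_M)$ and each $\zeta_i$ already lies in $F(\mathfrak{p}_L)$, the assertion that the classes of $\zeta_1,\dots,\zeta_h$ are linearly independent modulo $\ker\varphi$ is equivalent to the $k_0$-linear independence of their images in the target space $F(\mathfrak{p}_M)/[\pi]_F(F(\mathfrak{p}_M))$. Thus I would reduce the lemma to the following claim: if $c_1,\dots,c_h\in\mathcal{O}_{K_0}$ satisfy $\sum_{F;i=1}^{h}[c_i]_F(\zeta_i)\in[\pi]_F(F(\mathfrak{p}_M))$, then $c_i\in\pi\mathcal{O}_{K_0}$ for every $i$.

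The second step is to bring in the structural decomposition used in Lemma \ref{lem2.12}, now applied to the field $M$ (cf. Remark \ref{rem2.1}). By the hypothesis $W_F\cap F(\mathfrak{p}_M)=W^s_F$ the torsion submodule of the finitely generated $\mathcal{O}_{K_0}$-module $F(\mathfrak{p}_M)$ is exactly $W^s_F$, so the theory of modules over a PID gives a splitting $F(\mathfrak{p}_M)=W^s_F\oplus A_M$ with $A_M$ free. The crucial observation is then $$[\pi]_F(F(\mathfrak{p}_M))\cap W^s_F=[\pi]_F(W^s_F).$$ Indeed, if $[\pi]_F(w)\in W^s_F$ and we decompose $w=t\underset{F}+a$ with $t\in W^s_F$, $a\in A_M$, then $[\pi]_F(a)=\pi a$ is torsion and lies in the free module $A_M$, hence $a=0$ and $[\pi]_F(w)=[\pi]_F(t)\in[\pi]_F(W^s_F)$.

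Finally I would combine the two steps. The element $\sum_{F;i=1}^{h}[c_i]_F(\zeta_i)$ lies in $W^s_F$ and, by assumption, in $[\pi]_F(F(\mathfrak{p}_M))$; by the displayed identity it therefore lies in $[\pi]_F(W^s_F)$. Since $\zeta_1,\dots,\zeta_h$ are an $\mathcal{O}_{K_0}/\pi^s\mathcal{O}_{K_0}$-basis of $W^s_F\cong(\mathcal{O}_{K_0}/\pi^s\mathcal{O}_{K_0})^h$ (Lemma \ref{lem2.7}) and $[\pi]_F$ acts as multiplication by $\pi$ through the embedding $\mathcal{O}_{K_0}\hookrightarrow\mathrm{End}_{\mathcal{O}_K}(F)$, the submodule $[\pi]_F(W^s_F)$ consists precisely of those combinations all of whose coordinates are divisible by $\pi$. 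Reading off coordinates forces $c_i\equiv0\pmod{\pi}$ for all $i$, which is exactly the reduced claim.

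The main obstacle — and the place where the reasoning must be handled with care — is the identity $[\pi]_F(F(\mathfrak{p}_M))\cap W^s_F=[\pi]_F(W^s_F)$. It is tempting to attack the lemma through the cohomological Lemma \ref{lem2.8} or the norm criterion of Lemma \ref{lem2.9}, but since each $\zeta_i$ is $G$-invariant its norm is $[p^m]_F(\zeta_i)$, which vanishes whenever $p^m$ annihilates $W^s_F$, so those tools give no information and in fact lead to a circular argument. The genuine input is instead that the torsion of $F(\mathfrak{p}_M)$ coincides with $W^s_F$, i.e. that $M$ acquires no new torsion points beyond those already in $L$; this is precisely the standing hypothesis $W_F\cap F(\mathfrak{p}_L)=W_F\cap F(\mathfrak{p}_M)=W^s_F$, and it is what prevents an element of $W^s_F$ from being divisible by $\pi$ in $F(\mathfrak{p}_M)$ without already being divisible by $\pi$ inside $W^s_F$.
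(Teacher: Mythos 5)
Your proof is correct, and it reaches the lemma by a genuinely different route than the paper. The paper argues pointwise: assuming $\sum_{F;i}[a_i]_F\zeta_i=[\pi]_F(y)$ with $y\in F(\mathfrak{p}_M)$, it applies $[\pi^s]_F$ to get $[\pi^{s+1}]_F(y)=0$, uses the standing hypothesis $W_F\cap F(\mathfrak{p}_M)=W^s_F$ to downgrade this to $[\pi^s]_F(y)=0$, and then rewrites the original relation as $\sum_{F;i}[\pi^{s-1}a_i]_F\zeta_i=[\pi^s]_F(y)=0$, so that $a_i\divby\pi$ by Lemma \ref{lem2.7}; nothing beyond Lemma \ref{lem2.7} and the hypothesis is needed. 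You instead invoke the finite generation of $F(\mathfrak{p}_M)$ over $\mathcal{O}_{K_0}$ (the argument of Lemma \ref{lem2.12} and Remark \ref{rem2.1} applied to $M$) together with the structure theorem over a PID to split $F(\mathfrak{p}_M)=W^s_F\oplus A_M$, and from this you derive the saturation identity $[\pi]_F(F(\mathfrak{p}_M))\cap W^s_F=[\pi]_F(W^s_F)$, after which the divisibility $c_i\divby\pi$ is read off in coordinates. Both proofs turn on exactly the same arithmetic input --- that $M$ acquires no torsion beyond $W^s_F$ --- and your observation that Lemmas \ref{lem2.8} and \ref{lem2.9} are useless here (the $\zeta_i$ being $G$-invariant) matches the paper, which never touches them in this proof. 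What your route buys is a clean, reusable structural fact (the saturation identity); what it costs is the heavier input of finite generation and the splitting, neither of which the paper's three-line computation requires --- indeed your key identity follows without any splitting: if $[\pi]_F(w)\in W^s_F$ then $[\pi^{s+1}]_F(w)=0$, so $w\in W_F\cap F(\mathfrak{p}_M)=W^s_F$ by hypothesis, which is precisely the paper's step. One small point of care in your write-up: the reason $[\pi]_F(a)$ is torsion is that it equals $[\pi]_F(w)\underset{F}-[\pi]_F(t)\in W^s_F$; as phrased this step is compressed, and it is worth spelling out so the reader does not think you are claiming that every element of the form $[\pi]_F(a)$ is torsion.
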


\begin{proof}
Suppose the relation $\displaystyle\sum_{F;i=1}^h[a_i]_F\zeta_i=[\pi]_F(y)$ holds for some $a_i\in \mathcal{O}_{K_0}, y\in F(\mathfrak{p}_M)$. Applying the endomorphism $[\pi^s]_F$, we get that $[\pi^{s+1}]_F(y)=0$, which gives $[\pi^s]_F(y)=0$. The latter means that $\displaystyle\sum_{F;i=1}^h[\pi^{s-1}a_i]_F\zeta_i=0$, which is equivalent to the condition $a_i\divby \pi, 1\leq i\leq h$. The lemma is proved.

\end{proof}

\begin{cor}\label{cor2.1}
$h\leq n$
\end{cor}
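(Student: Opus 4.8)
The plan is to combine the three immediately preceding lemmas via a dimension count in the $k_0$-vector space $V := F(\mathfrak{p}_L)/[\pi]_F(F(\mathfrak{p}_L))$. By Lemma \ref{lem2.12} this space has dimension $n+h$, and by Lemma \ref{lem2.11} the kernel of the natural map $\varphi$ satisfies $\dim_{k_0}\ker\varphi = h$. Hence the quotient space $V/\ker\varphi$ has dimension $(n+h)-h = n$.

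Next I would bring in the residue classes of the torsion generators. Lemma \ref{lem2.13} tells us precisely that the $h$ elements $\zeta_1,\ldots,\zeta_h$, viewed in $V$, remain linearly independent after passing to the quotient $V/\ker\varphi$. Thus they furnish $h$ linearly independent vectors inside a $k_0$-vector space of dimension $n$.

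The desired inequality $h \leq n$ is then immediate, since the number of linearly independent vectors in a finite-dimensional vector space cannot exceed its dimension. There is no real obstacle here: the entire content has been pushed into Lemmas \ref{lem2.11}, \ref{lem2.12}, and \ref{lem2.13}, and the corollary is merely the bookkeeping that assembles them. The only point requiring a moment of care is to make sure the three lemmas are genuinely being applied to the \emph{same} vector space $V$ and the same kernel $\ker\varphi$, so that the dimensions subtract correctly and the independence statement of Lemma \ref{lem2.13} lands in the quotient of the expected dimension.
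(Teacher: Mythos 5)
Your proposal is correct and is essentially identical to the paper's own argument: both compute $\dim_{k_0}(V/\ker\varphi)=\dim\mathrm{Im}\,\varphi=(n+h)-h=n$ from Lemmas \ref{lem2.11} and \ref{lem2.12}, then invoke Lemma \ref{lem2.13} to exhibit $h$ linearly independent classes there, forcing $h\leq n$.
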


\begin{proof}
In view of the lemmas proved, it follows that the maximal number of linearly independent vectors modulo $\ker{\varphi}$ in $F(\mathfrak{m}_L)/[\pi]_F (F(\mathfrak{p}_L))$ is equal to
$$\dim{\text{Im}\varphi}=\dim_{k_0}(F(\mathfrak{p}_L)/[\pi]_F (F(\mathfrak{p}_L)))-\dim{\ker{\varphi}}=(n+h)-h=n.$$  By Lemma \ref{lem2.13} we already have $h$ linearly independent vectors modulo $\ker\varphi$, from which the desired result follows. 
\end{proof}

\subsection{The main theorem}

\begin{theorem*}\label{hondamodule}
If the extension $M/L$ is unramified and $W_F\cap F(\mathfrak{p}_L)=W_F\cap F(\mathfrak{p}_M)=W^s_F$, for some $s\geq 1$, then $h\leq n$ and for the $\mathcal{O}_{K_0}[G]$-module $F(\mathfrak{p}_M)$ there exist a system of generating elements $\theta_j,\xi_i,\omega_i,1\leq j\leq n-h,1\leq i\leq h$ with the only defining relations $[\pi^s]_F(\xi_i)=\omega^{\sigma}_i\underset{F}-\omega_i,1\leq i\leq h$.
\end{theorem*}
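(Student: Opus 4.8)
The plan is to realize $F(\mathfrak{p}_M)$ as a quotient of the abstractly presented $\mathcal{O}_{K_0}[G]$-module $\mathcal{M}$ generated by symbols $\theta_j,\xi_i,\omega_i$ ($1\le j\le n-h$, $1\le i\le h$) subject only to $[\pi^s]_F(\xi_i)=\omega_i^{\sigma}\underset{F}-\omega_i$, and then to show the resulting surjection $\Phi\colon \mathcal{M}\to F(\mathfrak{p}_M)$ is an isomorphism. Throughout I write the $G$-action additively, so $(\sigma-1)x$ means $x^{\sigma}\underset{F}-x$ and $N_{F(\mathfrak{p}_M)}$ is the sum $\sum_a x^{\sigma^a}$. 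First I would build the generators inside $F(\mathfrak{p}_M)$. Since $H^0(G,F(\mathfrak{p}_M))=0$ by Lemma \ref{lem2.8} and $F(\mathfrak{p}_M)^G=F(\mathfrak{p}_L)$, the norm $N_{F(\mathfrak{p}_M)}$ maps onto $F(\mathfrak{p}_L)$; choose $\xi_i$ with $N_{F(\mathfrak{p}_M)}(\xi_i)=\zeta_i$. Then $N_{F(\mathfrak{p}_M)}([\pi^s]_F\xi_i)=[\pi^s]_F\zeta_i=0$, so $[\pi^s]_F\xi_i\in\ker N_{F(\mathfrak{p}_M)}=(\sigma-1)F(\mathfrak{p}_M)$ by the vanishing of $H^{-1}$, which yields $\omega_i$ with $[\pi^s]_F\xi_i=\omega_i^{\sigma}\underset{F}-\omega_i$, exactly the asserted relation. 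Using Lemma \ref{lem2.13} and Corollary \ref{cor2.1} I complete $\{\zeta_i\}$ to a family $\{\zeta_i,\theta_j'\}\subset F(\mathfrak{p}_L)$ that is independent modulo $\ker\varphi$ and spans a complement $S$ of $\ker\varphi$, and take $\theta_j$ to be norm-preimages of $\theta_j'$.

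Next I would show these generate $F(\mathfrak{p}_M)$ as an $\mathcal{O}_{K_0}[G]$-module. By Lemma \ref{lem2.10} it suffices that the $G$-orbits of $\theta_j,\xi_i,\omega_i$ span the $k_0$-space $F(\mathfrak{p}_M)/[\pi]_F(F(\mathfrak{p}_M))$, of dimension $np^m+h$ by Remark \ref{rem2.1}. As $\{N_{F(\mathfrak{p}_M)}(\theta_j),N_{F(\mathfrak{p}_M)}(\xi_i)\}=\{\theta_j',\zeta_i\}$ is independent modulo $\ker\varphi$, Lemma \ref{lem2.9} gives that the orbits $\{\theta_j^{\sigma^a},\xi_i^{\sigma^a}\}$ contribute $np^m$ independent vectors, spanning a subspace $W$. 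The relation forces $(\sigma-1)\omega_i\equiv0$ modulo $[\pi]_F(F(\mathfrak{p}_M))$, so each $\omega_i$ contributes at most the single class $\overline{\omega_i}$. A dimension count shows $\ker\overline{N}\subseteq W$ (where $\overline N$ is the induced, surjective norm on the quotients), whence $W=\overline{N}^{-1}(S)$; thus $\overline{\omega_i}\notin W$ precisely when $\overline{N_{F(\mathfrak{p}_M)}(\omega_i)}$ is nonzero in $\ker\varphi$, and independence of the $\overline{\omega_i}$ reduces to $\{\overline{N_{F(\mathfrak{p}_M)}(\omega_i)}\}$ being a basis of $\ker\varphi$. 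This is the crux: expanding $N_{F(\mathfrak{p}_M)}(\omega_i)$ by the telescoping identity $\sigma^a\omega_i=\omega_i\underset{F}+\sum_{F;b=0}^{a-1}\sigma^b([\pi^s]_F\xi_i)$ and writing $[p^m]_F=[\pi^{em}u_0]_F$ with $e=e(K_0/\mathbb{Q}_p)$ and $u_0$ a unit, one finds $N_{F(\mathfrak{p}_M)}(\omega_i)=[\pi]_F\mu_i$ for an element $\mu_i$ with $(\sigma-1)\mu_i=[\pi^{s-1}]_F\zeta_i=\eta_i=(\sigma-1)t_i$, the $\xi_i$-contributions cancelling. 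Hence $\mu_i\underset{F}-t_i\in F(\mathfrak{p}_L)$ and $N_{F(\mathfrak{p}_M)}(\omega_i)\equiv[\pi]_F t_i$ modulo $[\pi]_F(F(\mathfrak{p}_L))$, so $\{\overline{N_{F(\mathfrak{p}_M)}(\omega_i)}\}$ is exactly the basis $\{\overline{[\pi]_F t_i}\}$ of $\ker\varphi$ from Lemma \ref{lem2.11}. This supplies the missing $h$ independent vectors and proves $\Phi$ surjective.

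To upgrade surjectivity to an isomorphism I would compare the underlying $\mathcal{O}_{K_0}$-module structures. Running the argument of Lemma \ref{lem2.12} over $M$, the hypothesis $W_F\cap F(\mathfrak{p}_M)=W^s_F$ gives $F(\mathfrak{p}_M)\cong(\mathcal{O}_{K_0}/\pi^s)^h\oplus\mathcal{O}_{K_0}^{np^m}$. On the other side $\mathcal{M}\cong\mathcal{O}_{K_0}[G]^{\,n-h}\oplus P^{\,h}$ with $P=\langle\xi,\omega\rangle/(\pi^s\xi-(\sigma-1)\omega)$; since $\mathcal{O}_{K_0}[G]\cong\mathcal{O}_{K_0}^{p^m}$, and a direct analysis of the presentation of $P$ (the annihilator of $\sigma-1$ in $\mathcal{O}_{K_0}[G]$ is $\mathcal{O}_{K_0}\cdot N$, from which one reads off $\mathrm{Tors}(P)\cong\mathcal{O}_{K_0}/\pi^s$ and free rank $p^m$) gives $\mathcal{M}\cong(\mathcal{O}_{K_0}/\pi^s)^h\oplus\mathcal{O}_{K_0}^{np^m}$ as well. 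Thus $\Phi$ is a surjection between two finitely generated $\mathcal{O}_{K_0}$-modules that are abstractly isomorphic; a surjective endomorphism of a finitely generated module over the Noetherian ring $\mathcal{O}_{K_0}$ is injective, so $\Phi$ is an isomorphism, which is exactly the assertion that the listed relations are the only defining relations. The bound $h\le n$ is Corollary \ref{cor2.1}.

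The hard part will be the independence of the $\omega_i$ in the second step, that is, the explicit norm computation yielding $\overline{N_{F(\mathfrak{p}_M)}(\omega_i)}=\overline{[\pi]_F t_i}$: once this identity is in hand, the dimension bookkeeping and the $\mathcal{O}_{K_0}$-structure comparison are routine. A secondary technical point is pinning down the torsion of the presented module $P$ as exactly $\mathcal{O}_{K_0}/\pi^s$ rather than a larger cyclic quotient.
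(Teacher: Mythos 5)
Your proof is correct, and while your construction of $\xi_i,\omega_i,\theta_j$ is identical to the paper's (norm preimages supplied by the vanishing of $H^0$ and $H^{-1}$, plus completion of $\{\zeta_i\}$ modulo $\ker\varphi$), both substantive steps follow genuinely different routes. For generation, the paper proves independence of $\mathscr{E}$ modulo $[\pi]_F(F(\mathfrak{p}_M))$ by contradiction, applying $\sigma-1$ to a hypothetical dependence and then taking norms; you instead compute $N_{F(\mathfrak{p}_M)}(\omega_i)$ explicitly, and your computation checks out: writing $N=p^m+(\sigma-1)S$ with $S=\sum_{b=0}^{p^m-2}(p^m-1-b)\sigma^b$, one gets $N_{F(\mathfrak{p}_M)}(\omega_i)=[p^m]_F\omega_i\underset{F}+[\pi^s]_F(S\xi_i)=[\pi]_F(\mu_i)$, and since $\pi^{em-1}u_0\pi^s=\pi^{s-1}p^m$ the $\xi_i$-terms in $(\sigma-1)\mu_i$ cancel, leaving $[\pi^{s-1}]_F(N\xi_i)=\eta_i$; hence $N_{F(\mathfrak{p}_M)}(\omega_i)\equiv[\pi]_F(t_i)$ modulo $[\pi]_F(F(\mathfrak{p}_L))$, and your dimension count $\ker\overline N\subseteq W$ is right (both sides have dimension $n(p^m-1)$, by surjectivity of $\overline N$ and rank--nullity applied to $\overline N$ restricted to $W$). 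This pinpoints the $h$ extra classes as the norm-preimage of the basis of $\ker\varphi$ from Lemma \ref{lem2.11}, which is more structural than the paper's argument. For the defining relations, the paper stays inside $F(\mathfrak{p}_M)$ and runs an iterative divisibility argument on coefficients, whereas you determine the abstract $\mathcal{O}_{K_0}$-module structure of the presented module $\mathcal{M}$ and quote that a surjective endomorphism of a finitely generated module over a commutative Noetherian ring is injective. Your analysis of $P$ is also correct: its torsion is generated by the class of $N\xi$, and the annihilator computation --- which must be applied over $\mathcal{O}_{K_0}/\pi^s\mathcal{O}_{K_0}$, where it still holds because $\sigma-1$ lifts to a monic, hence non-zero-divisor, polynomial --- gives $\mathrm{Tors}(P)\cong\mathcal{O}_{K_0}/\pi^s\mathcal{O}_{K_0}$ and free rank $p^m$, so both $\mathcal{M}$ and $F(\mathfrak{p}_M)$ are isomorphic to $(\mathcal{O}_{K_0}/\pi^s\mathcal{O}_{K_0})^h\oplus\mathcal{O}_{K_0}^{np^m}$, the latter by the hypothesis on $W_F\cap F(\mathfrak{p}_M)$, Lemma \ref{lem2.7}, and the argument of Lemma \ref{lem2.12} run over $M$. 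What your route buys is conceptual economy: injectivity comes essentially for free from structure theory, avoiding the paper's somewhat delicate ``and so on'' induction on divisibility. What the paper's route buys is self-containedness: it never needs to analyze the presented module or invoke the structure theorem for modules over a PID, working only with explicit relations inside $F(\mathfrak{p}_M)$.
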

\begin{proof}
 From the triviality of the group $H^0(G,F(\mathfrak{p}_M))$ follows the existence of elements $\xi_i\in F(\mathfrak{p}_M),1\leq i\leq h$, such that $N_{F(\mathfrak{p}_M)}(\xi_i)=\zeta_i$. Since $N_{F(\mathfrak{p}_M)}([\pi^s]_F(\xi_i))=[\pi^s]_F(\zeta_i)=0$ and the group  $H^{-1}(G,F(\mathfrak{p}_M))$ is trivial, there exist elements $\omega_i\in F(\mathfrak{p}_M),1\leq i\leq h$, satisfying the relations $[\pi^s]_F\xi_i=\omega^{\sigma}_i\underset{F}-\omega_i$. In view of Corollary \ref{cor2.1}, the system $\zeta_i,1\leq i\leq h$ can be supplemented to a basis modulo $\ker\varphi$ via elements $\varepsilon_j\in F(\mathfrak{m}_L),1\leq j\leq n-h$. For $1\leq j\leq n-h$ we select elements $\theta_j\in F(\mathfrak{p}_M)$, so that $N_{F(\mathfrak{p}_M)}(\theta_j)=\varepsilon_j$ for all $j$ and we prove that the system $$\mathscr{E}=\{\omega_i,\xi^{\sigma^k}_i,\theta^{\sigma^k}_j|1\leq i\leq h,1\leq j\leq n-h, 0\leq k\leq p^m-1\}$$ is linearly independent modulo $[\pi]_F (F(\mathfrak{p}_M))$. Assume the contrary that  there exist elements $a_i,a_{i,k},b_{j,k}\in \mathcal{O}_{K_0}$ and $\beta\in F(\mathfrak{p}_M)$ such that $$\sum_{F;i}[a_i]_F\omega_i\underset{F}+\sum_{F;i,k} [a_{i,k}]_F(\xi^{\sigma^k}_i)\underset{F}+\sum_{F;j,k}[b_{j,k}]_F(\theta^{\sigma^k}_j)\underset{F}+[\pi]_F(\beta)=0.$$  We apply $\sigma-1$ to both parts of the latter relation and use the relations $[\pi^s]_F\xi_i=\omega^{\sigma}_i\underset{F}-\omega_i,1\leq i\leq h$ to deduce the equality
$$\sum_{F;i,k}[a_{i,k}-a_{i,{k-1}}]_F(\xi^{\sigma^{k}}_i)\underset{F}+\sum_{F;j,k}[b_{j,k}-b_{j,{k-1}}]_F(\theta^{\sigma^{k}}_j)\underset{F}+\sum_{F;i}[a_i]_F[\pi^s]_F\xi_i\underset{F}+[\pi]_F(\beta^\sigma\underset{F}-\beta)=0.$$ From lemmas \ref{lem2.9} and \ref{lem2.13} it follows that the system $$\mathscr{E}_0=\{\xi^{\sigma^k}_i,\theta^{\sigma^k}_j|1\leq i \leq h,1\leq j\leq n-h, 0\leq k\leq p^m-1\}$$ is linearly independent modulo $[\pi]_F (F(\mathfrak{p}_M))$, so that $a_{i,k}(\text{mod}\ \pi)$ and $b_{j,k}(\text{mod}\ \pi)$ are independent of $k$. Therefore, without loss of generality we may assume that $$\sum_{F;i}[a_i]_F[\pi^s]_F\xi_i\underset{F}+[\pi]_F(\beta^\sigma\underset{F}-\beta)=0,$$ changing if needed $\beta$. From the obtained follows the existence of $b_i\in \mathcal{O}_{K_0},1\leq i\leq h$ such that $$\sum_{F;i}[a_i\pi^{s-1}]_F(\xi_i)\underset{F}+\beta^{\sigma}\underset{F}-\beta=\sum_{F;i}[b_i]_F\eta_i$$ Taking norms $N_{F(\mathfrak{p}_M)},$ the obtained relation leads to the equality $\sum_{F;i}[a_i\pi^{s-1}]_F(\zeta_i)=0$ which implies that $a_i\divby \pi,1\leq i\leq h$. From the linear independence of the system $\mathscr{E}_0$ it follows that $a_{i,k}\divby \pi$ and $b_{j,k}\divby \pi$ for all $i,j$ and $k$. This completes the proof of the linear independence of the system  $\mathscr{E}$. The number of vectors in it is $np^m+h=\dim_{k_0}\left(F(\mathfrak{p}_M)/[\pi]_F (F(\mathfrak{p}_M))\right)$, so that they generate the space $F(\mathfrak{p}_M)/[\pi]_F (F(\mathfrak{p}_M)).$ From lemma \ref{lem2.10} it follows that they generate $F(\mathfrak{p}_M) $ as an $\mathcal{O}_{K_0}$- module, and consequently the elements $\theta_j,\xi_i,\omega_i,1\leq j\leq n-h,1\leq i\leq h$ generate $F(\mathfrak{p}_M)$ as an $\mathcal{O}_{K_0}[G]$-module. It remains only to prove the assertion concerning defining relations. Let us further agree to write multiplication by elements of the ring $\mathcal{O}_{K_0}[G]$ through exponentiation. 
Suppose that the relation $$\sum_{F;i}\xi^{\alpha_i}_i\underset{F}+\sum_{F;i}\omega^{\beta_i}_i\underset{F}+\sum_{F;j}\theta^{\delta_j}_j=0,$$ holds for some elements $\alpha_i,\beta_i,\delta_j\in \mathcal{O}_{K_0}[G]$. Our goal is to prove the existence of elements $\gamma_i\in \mathcal{O}_{K_0}[G]$ for which $\alpha_i=\pi^s\gamma_i, \beta_i=(1-\sigma)\gamma_i$ and $\delta_j=0$. Indeed, let $\beta_i=b_i+(1-\sigma)\gamma_i$ for certain elements $b_i\in \mathcal{O}_{K_0}$ and $\gamma_i\in \mathcal{O}_{K_0}[G]$. Taking into account the relations $[\pi^s]_F\xi_i=\omega^{\sigma}_i\underset{F}-\omega_i$ for $1\leq i\leq h$ we get $$ \sum_{F;i}\omega^{b_i}_i\underset{F}+\sum_{F;i}\xi^{\alpha'_i}_i\underset{F}+\sum_{F;j}\theta^{\delta_j}_j=0,$$ where $\alpha'_i=\alpha_i-\pi^s\gamma_i$. Factoring the latter relation modulo $[\pi]_F (F(\mathfrak{p}_M))$ and recalling that the system $\mathscr{E}$ is a basis modulo $[\pi]_F (F(\mathfrak{p}_M))$, we find that there exist elements $b^{(1)}_i,\beta'_i,\delta^{(1)}_j\in \mathcal{O}_{K_0}[G]$ such that $b_i=\pi b^{(1)}_i,\alpha'_i=\pi\beta'_i,\delta_j=\pi\delta^{(1)}_j$. Therefore, for some elements $a_i\in \mathcal{O}_{K_0}$ we must have the equality $$ \sum_{F;i}\omega^{b^{(1)}_i}_i\underset{F}+\sum_{F;i}\xi^{\beta'_i-a_i\sum_k\sigma^k}_i\underset{F}+\sum_{F;j}\theta^{\delta^{(1)}_j}_j=0,$$ due to the fact that $\zeta_i=N_{F(\mathfrak{p}_M)}(\xi_i)=\xi^{\sum_k \sigma^k}_i$. For the same reasons, all $b^{(1)}_i$ and $\delta^{(1)}_j$ are divisible by $\pi$. By induction we construct sequences $(b^{(\nu)}_i)_{\nu\geq 0}$ and $(\delta^{(\nu)}_j)_{\nu\geq 0}$ satisfying the conditions $b^{(0)}_i=b_i,\delta^{(0)}_j=\delta_j, b^{(\nu)}_i=\pi b^{(\nu+1)}_i$ and $\delta^{(\nu)}_j=\pi\delta^{(\nu+1)}_j$ for all $\nu\geq 0,1\leq i\leq h,1\leq j\leq n-h$, from which it follows that $b_i=0$ for all $i$ and $\delta_j=0$ for all $j$. There remains only the relation $\sum_{F;i}\xi^{\alpha'_i}_i=0$. Let now $\alpha'_i=\sum_{k}a_{i,k}\sigma^k$, where $a_{i,k}\in \mathcal{O}_{K_0}$ for all $i,k$. The factorization modulo $[\pi]_F (F(\mathfrak{p}_M))$ yields $a_{i,k}=\pi b_{i,k}$. Further, we obtain that $$\sum_{F;i}\xi^{\sum_{k}b_{i,k}{\sigma^k}}_i=\sum_{F;i}[\lambda_i]_F(\zeta_i)=\sum_{F;i}\xi^{\lambda_i\sum_{k}\sigma^k}_i,$$ for some elements $\lambda_i\in \mathcal{O}_{K_0}$. Consequently $b_{i,k}(\text{mod} \pi)$ is the same for all $k$, and so on. In the end we get that $a_{i,k}=a_i$ and that $\sum_{F;i}[a_i]_F(\zeta_i)=0$, i.e. $a_i=\pi^s t_i$ for certain $t_i\in \mathcal{O}_{K_0}$ and therefore $$\alpha_i-\pi^s\gamma_i=\alpha'_i=\pi^s t_i\sum_{k}\sigma^k.$$ If we denote $\gamma'_i=\gamma_i+t_i\sum_{k}\sigma^k$, then we will have  $\alpha_i=\pi^s\gamma'_i$ and $\beta_i=(1-\sigma)\gamma_i=(1-\sigma)\gamma'_i$, thus completing the proof of the theorem .  

\end{proof}

\end{document}